\newtheorem{theorem}{Theorem}
\newtheorem{lemma}{Lemma}
\newtheorem{corollary}{Corollary}
\newtheorem{proposition}{Proposition}
\theoremstyle{definition}
\newtheorem{claim}{Claim}
\theoremstyle{remark}
\numberwithin{equation}{section}
\begin{document}
\title[Relation between Lebesgue summability and other summation methods]{On the relation between Lebesgue summability and some other summation methods}
\author[J. Vindas]{Jasson Vindas}
\address{Department of Mathematics, Ghent University, Krijgslaan 281 Gebouw S22, B 9000 Gent, Belgium}
\email{jvindas@cage.Ugent.be}

\subjclass[2010]{Primary 40E05, 42A24. Secondary 40G05, 40G10}
\keywords{Fourier series; Lebesgue summability; Ces\`{a}ro summability; Abel summability; Riemann summability; $(\gamma,\kappa)$ summability; Tauberian theorems}

\date{}

\begin{abstract}
It is shown that if 
$$
\sum_{n=1}^{N}n\left|c_{n}\right|=O(N)\:,
$$ then Lebesgue summability, $(\mathrm{C},\beta)$ summability ($\beta>0$), Abel summability, Riemann summability, and $(\gamma,\kappa)$ summability ($\kappa\geq 1$) of the series $\sum_{n=0}^{\infty}c_{n}$ are all equivalent to one another.
\end{abstract}

\maketitle

\section{Introduction}
\label{ls}
In this article we establish the equivalence between various methods of summability under a certain hypothesis (condition (\ref{lseq5}) below). Our results extend a recent theorem of M\'{o}ricz \cite[Thm. 1]{moricz2012}.

We are particularly interested in Lebesgue summability, a summation method that is suggested by the theory of trigonometric series \cite{zygmund}. Consider the formal trigonometric series
\begin{equation}
\label{lseq1}
\frac{a_{0}}{2}+\sum_{n=1}^{\infty}\left(a_{n}\cos nx +b_{n}\sin nx\right)\: .
\end{equation}
Formal integration of (\ref{lseq1}) leads to
\begin{equation}
\label{lseq2}
L(x)=\frac{a_{0}x}{2}+\sum_{n=1}^{\infty}\left(\frac{a_{n}}{n}\sin nx-\frac{b_{n}}{n}\cos nx\right)\: .
\end{equation}
One then says that the series (\ref{lseq1}) is Lebesgue summable at $x=x_{0}$ to  $s(x_{0})$ if (\ref{lseq2}) is convergent in a neighborhood of $x_{0}$ and 
\begin{equation}
\label{lseq3}
s(x_0)=\lim_{h\to0}\frac{\Delta L(x_{0};h)}{2h}\: ,
\end{equation}
where
\begin{align*}
\frac{\Delta L(x_{0};h)}{2h}&=\frac{ L(x_{0}+h)-L(x_{0}-h)}{2h}
\\
&=\frac{a_{0}}{2} +\sum_{n=1}^{\infty} \left(a_{n}\cos nx_{0} +b_{n}\sin nx_{0}\right)\frac{\sin nh}{nh}\:.
\end{align*}
In such a case one writes
$$
\frac{a_0}{2}+\sum_{n=1}^{\infty}\left(a_{n}\cos nx_{0}+b_{n}\sin nx_{0}\right)=s(x_{0}) \ \ \ (\mathrm{L})\:.
$$
Observe that (\ref{lseq3}) tells that the symmetric derivative of the function $L$ exists and equals $s(x_{0})$ at the point $x=x_0$. 

The Lebesgue method of summation is somehow complicated, since it is not regular. In fact, if the series (\ref{lseq1}) converges at $x=x_{0}$, then it is not necessarily Lebesgue summable at $x=x_0$. 

Zygmund investigated conditions under which Lebesgue summability is equivalent to convergence \cite[pp. 321-322]{zygmund}. Among other things, he proved the following result. Set $\rho_{n}=\sqrt{|a_{n}|^{2}+|b_{n}|^{2}}$.

\begin{theorem}[Zygmund] \label{lsth1} If
\begin{equation}
\label{lseq4}
\rho_{n}=O(1/n)\:,
\end{equation}
then, the series $a_0/2+\sum_{n=1}^{\infty} \left(a_{n}\cos nx +b_{n}\sin nx\right)$ is convergent at $x=x_{0}$ to $s(x_0)$ if and only if it is Lebesgue summable at $x=x_{0}$ to $s(x_0)$.
\end{theorem}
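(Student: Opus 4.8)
The plan is to strip the statement down to a Tauberian-type equivalence for a numerical series and then to confront the Lebesgue kernel $\sin(nh)/(nh)$ head on. Put $u_{0}=a_{0}/2$ and $u_{n}=a_{n}\cos nx_{0}+b_{n}\sin nx_{0}$ for $n\geq 1$, so that $|u_{n}|\leq\rho_{n}=O(1/n)$, that is $nu_{n}=O(1)$. Under (\ref{lseq4}) one has $\sum_{n}\rho_{n}/n<\infty$, so the integrated series (\ref{lseq2}) converges absolutely and $L$ is well defined; moreover $|u_{n}\sin(nh)/(nh)|\leq C/(n^{2}h)$ for each fixed $h\neq 0$, so the Lebesgue mean
\[
\Phi(h)=\sum_{n=0}^{\infty}u_{n}\,\frac{\sin nh}{nh}
\]
is absolutely convergent, and by (\ref{lseq3}) Lebesgue summability to $s=s(x_{0})$ means exactly $\Phi(h)\to s$ as $h\to 0$. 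Convergence of the series at $x_{0}$ is the statement that the partial sums $s_{N}=\sum_{n\leq N}u_{n}$ tend to $s$. Thus the theorem reduces to the assertion that, under $nu_{n}=O(1)$, one has $s_{N}\to s$ if and only if $\Phi(h)\to s$.

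For the implication convergence $\Rightarrow$ summability I would use $\sum_{n}u_{n}=s$ to write $\Phi(h)-s=\sum_{n\geq 1}u_{n}\eta(nh)$ with $\eta(t)=\sin(t)/t-1$, introduce the tails $t_{n}=\sum_{k\geq n}u_{k}\to 0$, and sum by parts:
\[
\Phi(h)-s=t_{1}\eta(h)+\sum_{n\geq 2}t_{n}\bigl(\eta(nh)-\eta((n-1)h)\bigr),
\]
the boundary term at infinity vanishing because $t_{n}\to 0$ and $\eta$ is bounded. I would then split at $n\sim 1/h$. On the head ($nh\leq 1$) the bound $|\eta'(t)|\leq Ct$ gives $|\eta(nh)-\eta((n-1)h)|\leq Cnh^{2}$, so the head is $O\bigl(h^{2}\sum_{n\leq N}n|t_{n}|\bigr)=o(1)$, using that $t_{n}\to 0$ forces $\sum_{n\leq N}n|t_{n}|=o(N^{2})$; note that the cruder estimate $|\eta(nh)|\leq (nh)^{2}/6$ against $|u_{n}|\leq C/n$ only yields $O(1)$ here, so convergence, not merely the Tauberian bound, is genuinely needed on the head. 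The oscillatory tail $\sum_{n>1/h}u_{n}\sin(nh)/(nh)$ is the delicate remaining piece, treated below.

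For the converse---the Tauberian direction---note that $nu_{n}=O(1)$ is exactly Hardy's condition. I would reduce it to Hardy's classical theorem (if $nu_{n}=O(1)$ and $\sum u_{n}$ is $(\mathrm{C},1)$ summable to $s$, then $\sum u_{n}$ converges to $s$) by showing, via the same kernel analysis used below, that under $nu_{n}=O(1)$ the condition $\Phi(h)\to s$ is equivalent to $(\mathrm{C},1)$ summability to $s$. As $(\mathrm{C},1)$ is regular, this simultaneously records the equivalence convergence $\Leftrightarrow$ $(\mathrm{C},1)$ that anchors the circle of methods in the abstract.

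The main obstacle, common to both directions, is the oscillatory tail $\sum_{n>1/h}u_{n}\sin(nh)/(nh)$. The kernel $\sin t/t$ has infinite total variation on $[0,\infty)$, so no estimate resting on absolute convergence, on monotonicity, or on a single summation by parts can work: splitting naively at $n\sim 1/h$ in fact leaves a nonzero constant, as the (divergent) model $u_{n}=1/n$ shows, where $\Phi(h)-s_{\lfloor 1/h\rfloor}\to 1-\gamma$. The tail must instead be handled by extracting the cancellation of $\sin nh$ in tandem with the Tauberian bound $nu_{n}=O(1)$---for instance through a second summation by parts against the bounded partial sums of $\sum\sin nh$, or through the integral representation $\Phi(h)=\tfrac{1}{h}\int_{0}^{h}\sum_{n}u_{n}\cos nt\,dt$ and an analysis of its behaviour near $0$. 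This is exactly the point at which the hypothesis $\rho_{n}=O(1/n)$ cannot be dispensed with, in keeping with the non-regularity of the Lebesgue method noted above.
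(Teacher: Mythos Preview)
Your high-level architecture coincides with the paper's: reduce everything to the equivalence of Lebesgue summability with $(\mathrm{C},1)$ summability under the Tauberian hypothesis, then invoke Hardy's elementary theorem together with the regularity of $(\mathrm{C},1)$ to close the loop with convergence. This is precisely how the paper obtains Theorem~\ref{lsth1}---not by a standalone argument, but as a corollary of Theorem~\ref{lsth3} (the case $\lambda_{n}=n$ of Theorem~\ref{lsth4}) combined with Hardy's theorem.

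Where you diverge is in how the key equivalence $(\mathrm{L})\Leftrightarrow(\mathrm{C},1)$ is established. The paper does not attempt a self-contained kernel analysis. For $(\mathrm{C},1)\Rightarrow(\mathrm{L})$ (the implication (e)$\Rightarrow$(c) in Theorem~\ref{lsth4}) it integrates by parts \emph{twice} and works with the Riesz mean $S_{1}(x)=\int_{0}^{x}S(t)\,\mathrm{d}t=o(x)$ rather than with tails $t_{n}$; after two integrations the integrand is $S_{1}(t)\gamma_{1}''(ht)$, and the decay $\gamma_{1}''(x)=O(1)$ combined with $S_{1}(t)=o(t)$ and the hypothesis (\ref{lseq2.6}) is enough to control all pieces of the split. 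For the converse $(\mathrm{L})\Rightarrow(\mathrm{C},\beta)$ (the implication (b)$\Rightarrow$(d)) the paper abandons kernel estimates entirely: it reads $(\gamma,k)$ summability as the existence of a symmetric distributional point value of $f(x)=\sum c_{n}e^{i\lambda_{n}x}$ at $0$, and then invokes a Fourier-inversion theorem for tempered distributions to deduce Ces\`{a}ro summability of the coefficient series.

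The genuine gap in your proposal is that the oscillatory tail is diagnosed but not dispatched. You correctly note that $\sin t/t$ has infinite variation on $[1,\infty)$, so one Abel summation cannot close the estimate; but your two suggested remedies are left as sketches, and neither is straightforward as stated. A second summation by parts against $\sum\sin nh$ runs into the fact that those partial sums are $O(1/h)$, not $O(1)$, while the successive differences of $u_{n}/(nh)$ are not controlled by $nu_{n}=O(1)$ alone. The integral representation $\Phi(h)=h^{-1}\int_{0}^{h}\sum u_{n}\cos nt\,\mathrm{d}t$ is closer to the paper's route, but to make it work you need information about the Ces\`{a}ro behaviour of $\sum u_{n}\cos nt$ near $t=0$, not just convergence at $t=0$---which is why the paper passes through $(\mathrm{R},\{\lambda_{n}\},1)$ first. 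As written, both your forward implication and your converse rest on this unresolved tail; the strategy is sound, but the execution needs either the second integration by parts starting from Riesz means, or an appeal to the distributional machinery, to go through.
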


M\'{o}ricz has recently studied the role of a certain weaker condition than (\ref{lseq4}) in Lebesgue summability. He has complemented Theorem \ref{lsth1} by showing \cite[Thm. 1]{moricz2012}:

\begin{theorem}[M\'{o}ricz]\label{lsth2} Suppose that
\begin{equation}
\label{lseq5}
\sum_{n=1}^{N}n\rho_{n}=O(N)\: .
\end{equation}
If the series  $a_0/2+\sum_{n=1}^{\infty} \left(a_{n}\cos nx +b_{n}\sin nx \right)$ converges at $x=x_{0}$ to $s(x_0)$, then it is also Lebesgue summable at $x=x_{0}$ to $s(x_0)$.
\end{theorem}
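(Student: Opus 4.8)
The plan is to reduce the statement to a single limit assertion and then to isolate where the hypothesis (\ref{lseq5}) must do its work. Write $u_n=a_n\cos nx_0+b_n\sin nx_0$, so that $|u_n|\le\rho_n$ and, by assumption, $s_n:=\sum_{k=1}^n u_k\to s:=s(x_0)-a_0/2$. Since the Lebesgue mean equals $a_0/2+\sum_{n\ge1}u_n(\sin nh)/(nh)$, proving Lebesgue summability to $s(x_0)$ is exactly proving
$$R(h):=\sum_{n=1}^\infty u_n\left(\frac{\sin nh}{nh}-1\right)\longrightarrow 0\qquad(h\to0).$$
It is convenient to record $(\sin nh)/(nh)-1=\frac1h\int_0^h(\cos n\tau-1)\,d\tau$, which after interchanging sum and integral gives $R(h)=\frac1h\int_0^h\big(C(\tau)-s\big)\,d\tau$ with $C(\tau)=\sum_{n\ge1}u_n\cos n\tau$; thus it suffices to control $C(\tau)-s$ in an averaged sense as $\tau\to0^+$. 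Throughout I would first derive, from (\ref{lseq5}), the estimate $\sum_{n>N}\rho_n/n=O(1/N)$ by Abel summation with dyadic blocking out of $\sum_{n\le M}n\rho_n\le CM$; in particular $\sum_n\rho_n/n<\infty$, so $\sum_n u_n/n$ converges absolutely and $C(\tau)-s$ is a genuinely convergent series.

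Next I would split the sum defining $R(h)$ at $N=\lfloor1/h\rfloor$. For the low-frequency block $n\le N$ I would use the Taylor bound $|(\sin nh)/(nh)-1|\le C(nh)^2$ together with summation by parts against $s_n$. Writing $\epsilon_n=s_n-s$, the boundary terms tend to $0$ because $\epsilon_N\to0$ and $(\sin h)/h\to1$, while the remaining sum is dominated by $Ch^2\sum_{n\le N}n\,|\epsilon_n|$; since $\epsilon_n\to0$ one has the Ces\`aro-type smallness $\sum_{n\le N}n|\epsilon_n|=o(N^2)$, and hence the low-frequency block is $o(1)$. Notably, this block is controlled by the convergence hypothesis alone. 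For the high-frequency block I would peel off the trivial piece $-\sum_{n>N}u_n=\epsilon_N\to0$, leaving $\sum_{n>N}u_n(\sin nh)/(nh)=\frac1h\sum_{n>N}(u_n/n)\sin nh$ to be shown $o(1)$.

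The hard part will be precisely this high-frequency, near-diagonal term, and here both hypotheses must be used in a coupled way. The obstruction is structural: because $\sin t/t$ has infinite total variation on $[1,\infty)$, every estimate that passes to absolute values — whether bounding by $\frac1h\sum_{n>N}\rho_n/n=O(1)$, or applying Dirichlet's test after one summation by parts, where the variation of $(u_n/n)$ contributes $\sum_{n>N}\rho_n/n=O(h)$ against the $O(1/h)$ bound for the conjugate-cosine partial sums — yields only $O(1)$ rather than $o(1)$. A one-line example such as $u_n=(-1)^nc/n$ shows why: the $O(1)$ contributions of $\sum_{n>N}u_n(\sin nh)/(nh)$ and of $\sum_{n>N}u_n$ cancel, and the surviving limit is $0$. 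The resolution I would pursue is to keep this cancellation intact rather than destroy it with absolute values: working from $R(h)=\frac1h\int_0^h(C(\tau)-s)\,d\tau$, I would establish the averaged limit by summing by parts with respect to the tails $T_n=\sum_{k>n}u_k/k$, which satisfy $|T_n|=O(1/n)$ and, crucially, the summable-variation bound $\sum_n|T_{n+1}-T_n|=\sum_n\rho_{n+1}/(n+1)<\infty$ coming from (\ref{lseq5}). This converts the problematic term into $\sum_n(Q_n-n)(T_{n+1}-T_n)$ with $Q_n=\sum_{k\le n}\cos\big((k+\tfrac12)h\big)$ and $Q_n-n\to0$, and the expectation is that the coupling of the tail decay (from convergence) with the summable variation (from the hypothesis) finally produces the required $o(1)$. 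Making this last coupling quantitative, uniformly across the scale-invariant transition region $n\sim1/h$, is the step I expect to be the crux of the argument.
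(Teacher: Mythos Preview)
Your proposal has a genuine gap: you correctly isolate the high-frequency term $\sum_{n>N}u_n(\sin nh)/(nh)$ with $N=\lfloor 1/h\rfloor$ as the obstruction and correctly observe that every absolute-value estimate yields only $O(1)$, but your proposed resolution through the tails $T_n$ is not carried out, and the integral representation $R(h)=h^{-1}\int_0^h(C(\tau)-s)\,d\tau$ that you fall back on is itself unjustified: the series $C(\tau)=\sum u_n\cos n\tau$ need not converge for $\tau\ne0$ under the stated hypotheses, since (\ref{lseq5}) gives only $\sum\rho_n/n<\infty$, not $\sum\rho_n<\infty$, and convergence of (\ref{lseq1}) at $x_0$ says nothing about nearby points. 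So the interchange of sum and integral is illegitimate, and your final summation-by-parts scheme with $T_n$ and $Q_n$ is built on sand.

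The missing idea is simpler than the route you sketch: replace the fixed cut at $1/h$ by a floating cut at $\mu/h$ with a large auxiliary parameter $\mu$. For the tail $n>\mu/h$, Lemma~\ref{lsl1} gives directly
\[
\Bigl|\sum_{n>\mu/h}u_n\,\frac{\sin nh}{nh}\Bigr|\le \frac{1}{h}\sum_{n>\mu/h}\frac{\rho_n}{n}=O\!\left(\frac{1}{\mu}\right),
\]
which is $<\varepsilon/4$ once $\mu$ is chosen large; this is exactly the step where your $O(1)$ becomes small. The price is that the low-frequency part now runs up to $nh=\mu$, where your Taylor bound and single summation by parts no longer suffice. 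The paper's remedy (in the proof of the implication (e)$\Rightarrow$(c) of Theorem~\ref{lsth4}) is to integrate by parts \emph{twice}, passing from $S(x)=s_{\lfloor x\rfloor}$ to its primitive $S_1(x)=\int_0^x S(t)\,dt$: convergence $s_n\to s$ yields $S_1(x)=sx+o(x)$, and the surviving integral $h^{2}\int_0^{\mu/h}S_1(t)\gamma_1''(ht)\,dt$ is controlled by a second split at a large fixed $y$ together with the boundedness of $\gamma_1''$. The paper does not prove Theorem~\ref{lsth2} in isolation; it is read off from the chain $\text{convergence}\Rightarrow(\mathrm{R},\{n\},1)\Rightarrow(\gamma,\{n\},1)\Rightarrow(\mathrm{L},\{n\})$, the last arrow being Proposition~\ref{lsp1}.
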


We have found here that, under condition (\ref{lseq5}), not only may Lebesgue summability be concluded from much weaker assumptions than convergence, but also it becomes equivalent to a number of familiar summability methods. In particular, we shall prove the following theorem, which considerably improves Theorem \ref{lsth2} and may be interpreted as a Tauberian theorem relating various summability procedures. In the next statement $(\mathfrak{R},1)$ and $(\mathfrak{R},2)$ denote the Riemann summability methods \cite[Sect. 4.17]{hardy}, while $(\mathrm{C},\beta)$ stands for Ces\`{a}ro summability.

\begin{theorem}
\label{lsth3}
Suppose that (\ref{lseq5}) is satisfied. Then, the following statements are equivalent. The trigonometric series $a_{0}/2+\sum_{n=1}^{\infty}\left(a_{n}\cos nx +b_{n}\sin nx\right)$ is:
\begin{itemize}
\item [(i)] Lebesgue summable at $x=x_{0}$ to $s(x_0)$.
\item [(ii)]Abel summable at $x=x_{0}$ to $s(x_0)$.
\item [(iii)] $(\mathrm{C},\beta)$ summable, for $\beta>0$, at $x=x_{0}$ to $s(x_0)$.
\item [(iv)] $(\mathfrak{R},1)$ at $x=x_{0}$ to $s(x_0)$.
\item [(v)] $(\mathfrak{R},2)$ at $x=x_{0}$ to $s(x_{0})$.
\end{itemize}
\end{theorem}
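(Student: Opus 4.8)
The plan is to treat the five procedures uniformly as kernel means $M_\phi(\delta)=\sum_{n=0}^\infty c_n\,\phi(n\delta)$, where $c_0=a_0/2$ and $c_n=a_n\cos nx_0+b_n\sin nx_0$, so that $|c_n|\le\rho_n$ and (\ref{lseq5}) reads $\sum_{n\le N}n|c_n|=O(N)$. First I would extract the consequences of (\ref{lseq5}) that the whole argument rests on. Since $n|c_n|$ is a single term of $\sum_{k\le n}k\rho_k=O(n)$, the coefficients are bounded, $c_n=O(1)$; for $n\ge N$ the estimate $|c_n|\le n\rho_n/N$ summed over a dyadic block gives $\sum_{N\le n\le 2N}|c_n|=O(1)$, so $(s_N)$ oscillates by $O(1)$ over dyadic blocks; one summation by parts yields $\sum_n\rho_n/n<\infty$, whence (\ref{lseq2}) converges uniformly, $L$ is continuous, and for each fixed $h>0$ the series $\tfrac{a_0}{2}+\sum_n c_n\tfrac{\sin nh}{nh}$ converges absolutely and equals $\Delta L(x_0;h)/2h$; and a second summation by parts gives $s_N-\sigma_N=\tfrac{1}{N+1}\sum_{j\le N}jc_j=O(1)$, the bounded gap between the partial sums and their $(\mathrm C,1)$ means.

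The first consequence already settles one equivalence: statement (i), the existence of $\lim_{h\to0}\Delta L(x_0;h)/2h$, is verbatim statement (iv), the $(\mathfrak R,1)$ mean with kernel $\phi_1(t)=\sin t/t$, so (i)$\Leftrightarrow$(iv) costs nothing. Next I would write the remaining methods through their kernels --- $\phi_A(t)=e^{-t}$ for Abel, $\phi_\beta(t)=(1-t)_+^\beta$ for the Riesz means equivalent to $(\mathrm C,\beta)$, and $\phi_2(t)=\bigl(\sin(t/2)/(t/2)\bigr)^2$ for $(\mathfrak R,2)$, the latter arising as the second symmetric derivative at $x_0$ of the twice-integrated series --- and invoke the classical consistency theorems that need no Tauberian input: lower-order Ces\`aro implies higher-order Ces\`aro, $(\mathrm C,\beta)\Rightarrow$ Abel, and, using $c_n=O(1)$, the classical $(\mathrm C,1)\Rightarrow(\mathfrak R,2)$.

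The substance of the theorem --- and the step I expect to be the main obstacle --- is that every remaining implication is genuinely Tauberian and must be earned from (\ref{lseq5}); indeed the excerpt already notes that plain convergence does \emph{not} force Lebesgue summability, so none of the passages \emph{into} the $(\mathfrak R,1)$ kernel can be free. I would route all reversals through $(\mathrm C,1)$ as a hub: assuming $M_\phi(\delta)\to s$ for one of the kernels, couple the scale to the index by $N\sim1/\delta$ and estimate $M_\phi(\delta)-\sigma_N$, using the bounded block oscillation $\sum_{N\le n\le2N}|c_n|=O(1)$ and the bounded gap $s_N-\sigma_N=O(1)$ to absorb the discrepancy and conclude $\sigma_N\to s$. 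It is essential here that one \emph{cannot} simply prove a difference of two means is $o(1)$ from (\ref{lseq5}) alone, since those bounds only give $M_\phi-M_\psi=O(1)$; the cancellation must be fed by the assumed convergence of $M_\phi$, and (\ref{lseq5}) serves to transfer it.

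I expect the reversals to split sharply by kernel type. For the positive kernels $\phi_A$ and $\phi_\beta$ the return to $(\mathrm C,1)$ is a Karamata--Hardy--Littlewood Tauberian argument in which (\ref{lseq5}) provides the slow oscillation, and the forward strengthening $(\mathrm C,1)\Rightarrow(\mathrm C,\beta)$ for $0<\beta<1$ is of the same nature. The genuinely hard cases are the Riemann kernels: the Fourier transform of $\sin t/t$ is compactly supported, hence vanishes off a bounded set, so Wiener's theorem gives no soft comparison and the sinc reversal cannot be treated by kernel algebra. Here I would argue through the trigonometric-series structure itself, using that under (\ref{lseq5}) the $(\mathfrak R,1)$ and $(\mathfrak R,2)$ means are the first and second symmetric-difference quotients of the continuous primitives of the series, so that recovering $(\mathrm C,1)$ becomes a Tauberian theorem for symmetric differentiation controlled by $\sum_{N\le n\le2N}|c_n|=O(1)$ and $\sum_n\rho_n/n<\infty$. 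Assembling the classical inclusions of the second step with these Tauberian reversals closes every loop through the hub and pins the common value to $s(x_0)$, giving (i)$\Leftrightarrow$(ii)$\Leftrightarrow$(iii)$\Leftrightarrow$(iv)$\Leftrightarrow$(v).
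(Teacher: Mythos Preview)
Your hub-and-spoke plan through $(\mathrm C,1)$ is coherent in outline but differs markedly from the paper's route, and two load-bearing steps are not actually secured. The claim that (\ref{lseq5}) ``provides the slow oscillation'' for a Karamata--Hardy--Littlewood reversal Abel$\Rightarrow(\mathrm C,1)$ is wrong as stated: (\ref{lseq5}) gives only the bounded gap $s_N-\sigma_N=\tfrac{1}{N+1}\sum_{k\le N}kc_k=O(1)$, not $s_m-s_n=o(1)$ for $m/n\to 1$, and a two-sided $O(1)$ bound of this kind is \emph{not} a Tauberian condition in the classical slow-oscillation framework (take $c_{2^j}=1$ and $c_k=0$ otherwise: (\ref{lseq5}) holds but the partial sums are not slowly oscillating). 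What you actually need here is Jakimovski's theorem, Abel$\Rightarrow(\mathrm C,\beta)$ under $\sum_{k\le N}kc_k=O(N)$, which the paper explicitly flags as prior work and which is not a routine Hardy--Littlewood argument. More seriously, the crucial reversal $(\mathfrak R,1)\Rightarrow(\mathrm C,\beta)$ is only \emph{named} (``a Tauberian theorem for symmetric differentiation'') and never carried out; you correctly observe that the sinc kernel's compactly supported transform blocks any Wiener-type comparison, but no substitute mechanism is supplied. Note too that $(\mathfrak R,1)$ is not regular, so even the ``forward'' passage $(\mathrm C,1)\Rightarrow(\mathfrak R,1)$ is not free and is absent from your list of consistency inclusions.

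The paper closes exactly these gaps by a different device. It reads $(\gamma,\{\lambda_n\},k)$ summability for an integer $k$ as the statement that the tempered distribution $f(x)=\sum_n c_ne^{i\lambda_n x}$ has a \emph{distributional symmetric point value} $\ell$ of order $k$ at $0$, and then invokes a pointwise Fourier-inversion theorem for such point values \cite[Thm.~5.18]{p-s-v} to deduce Riesz summability of some order $\beta$; this is how $(\mathfrak R,1)\Rightarrow(\mathrm R,\{\lambda_n\},\beta)$ is obtained, with no kernel comparison at all. The equivalence Riesz$\Leftrightarrow$Abel is quoted from \cite{estrada-vindas2012Taub} (the $\lambda_n=n$ case being Jakimovski's), and the remaining direction $(\mathrm R,\{\lambda_n\},1)\Rightarrow(\gamma,\{\lambda_n\},1)$ is handled by a concrete $\varepsilon$--$\delta$ estimate: split the sum at $\lambda_n\approx\mu/h$, integrate by parts twice against $\gamma_1''$, and control the pieces via Lemma~\ref{lsl1} and $S_1(x)=o(x)$; Claim~\ref{lsclaim} then propagates to all $\kappa\ge1$. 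So the paper's proof is not a kernel-comparison Tauberian scheme, and the distributional point-value machinery is doing the work that your sketch leaves as an unfulfilled promise.
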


As shown in Section \ref{The main result}, an additional summability method, which naturally generalizes the Riemann methods $(\mathfrak{R},1)$ and $(\mathfrak{R},2)$, can also be added to the list from Theorem \ref{lsth3} (the so-called $(\gamma,\kappa)$ summability, $\kappa\geq 1$, introduced and studied by Guha in  \cite{guha}). Furthermore, it should also be noticed that Theorem \ref{lsth3} includes Theorem \ref{lsth1} as a particular instance, as immediately follows from, say, Hardy's elementary Tauberian theorem for $(\mathrm{C},1)$ summability \cite{hardy,korevaarbook}.

Theorem \ref{lsth3} actually admits a generalization to the class of summability methods discussed in Section \ref{summability methods}. The main result of this paper is Theorem \ref{lsth4}, stated in Section \ref{The main result}. In Section \ref{The main result} we will also obtain extensions of Theorem \ref{lsth1}. 

It should be mentioned that Theorem \ref{lsth3} intersects with the work of Jakimovski. In fact, the equivalence between (ii) and (iii), under the even weaker condition
\begin{equation}
\label{lseq1.6}
\sum_{n=1}^{N}n\left(a_{n}\cos nx_{0}+b_{n}\sin nx_{0}\right)=O(N)\: ,
\end{equation}
 was already established by him  (cf. \cite[Thm. 6.2]{jakimovski}). On the other hand, the equivalence between the rest of summability methods from Theorem \ref{lsth3} (and that of Guha) appears to be new in the literature. It is also worth pointing out that Jakimovski extensively investigated in \cite{jakimovski} Tauberian theorems for Abel-type methods and Borel summability in which the conclusion is Ces\`{a}ro summability. His Tauberian conditions are in terms of growth comparisons between different higher order Ces\`{a}ro means, or more generally suitable Hausdorff transforms, of the series. Jakimovski's Tauberian conditions may be regarded as average generalizations of (\ref{lseq1.6}). Our approach in this article differs from that developed in \cite{jakimovski}. In the proof of our main result, Theorem \ref{lsth4}, we shall exploit some results by Estrada and the author connecting summability of Fourier series and integrals with local behavior of Schwartz distributions \cite{estrada-vindas2012Taub,vindas-estrada2007,vindas-estrada2010Inv} (cf. \cite{walter1}).

Finally, we mention that Pati \cite{pati2005} and \c{C}anak et al \cite{canak-e-t} have recently made use of
Tauberian conditions involving Ces\`{a}ro average versions of (\ref{lseq1.6}) in the study of Tauberian theorems for the so-called $(\mathrm{A})(\mathrm{C},\alpha)$ summability.

\section{Summability methods}
\label{summability methods}
We collect here the summability methods that will be studied in Section \ref{The main result}. Let $\left\{\lambda_{n}\right\}_{n=0}^{\infty}$ be an increasing sequence of non-negative real numbers tending to infinity. 

We begin with Riesz summability \cite{hardy}. Let $\beta\geq 0$. We say that the series $\sum_{n=0}^{\infty}c_{n}$ is $(\mathrm{R},\left\{\lambda_{n}\right\},\beta)$ summable to $\ell$ if 
$$
\ell=\lim_{x\to\infty} \sum_{\lambda_{n}\leq x} c_n \left(1-\frac{\lambda_{n}}{x}\right)^{\beta}\: .
$$ 
In such a case, we write
\begin{equation}
\label{lseq2.1}
\sum_{n=0}^{\infty}c_{n}=\ell \ \ \ (\mathrm{R},\left\{\lambda_{n}\right\},\beta)\: .
\end{equation}
In the special case $\lambda_{n}=n$, the summability (\ref{lseq2.1}) is equivalent to Ces\`{a}ro $(\mathrm{C},\beta)$ summability, as follows from the well known equivalence theorem of Marcel Riesz \cite{hardy,ingham1968}.

The extended Abel summation method is defined as follows \cite{hardy}. We say that the series $\sum_{n=0}^{\infty}c_{n}$ is $(\mathrm{A},\left\{\lambda_{n}\right\})$ summable to $\ell$ if $\sum_{n=0}^{\infty} c_n e^{-\lambda_n y}$ converges for $y>0$ and
$$
\ell=\lim_{y\to0^{+}} \sum_{n=0}^{\infty} c_n e^{-\lambda_n y}\: ;
$$ 
we then write
\begin{equation}
\label{lseq2.2}
\sum_{n=0}^{\infty}c_{n}=\ell \ \ \ (\mathrm{A},\left\{\lambda_{n}\right\})\: .
\end{equation}
When $\lambda_n=n$, one recovers the usual Abel summability method $(\mathrm{A})$ in (\ref{lseq2.2}).

We shall also consider a generalization of Guha's method from \cite{guha}. We need to introduce the so-called Young functions \cite{hobsonII}. They are given by the Ces\`{a}ro (integral) means of $\cos x$. Let $\kappa\geq 0$. We set $\gamma_{0}(x)=\cos x$ and, for $\kappa>0$,
\begin{equation}
\label{lseq2.3}
\gamma_{\kappa}(x)= \frac{\kappa}{x}\int_{0}^{x} \left(1-\frac{t}{x}\right)^{\kappa-1}  \cos t\: \mathrm{d}t\: .
\end{equation} 
It is said that $\sum_{n=0}^{\infty}c_n$ is $(\gamma, \left\{\lambda_n\right\},\kappa)$ summable to $\ell$ if the following two conditions hold:
\begin{equation*}
\sum_{n=0}^{\infty}c_{n} \gamma_{\kappa}(\lambda_{n}h ) \ \ \ \mbox{converges for small } h>0\:,
\end{equation*}
and
\begin{equation*}
\ell=\lim_{h\to 0^{+}}\sum_{n=0}^{\infty}c_{n} \gamma_{\kappa}(\lambda_{n}h )\: .
\end{equation*}

We employ the notation 
\begin{equation}
\label{lseq2.4}
\sum_{n=0}^{\infty}c_{n}=\ell \ \ \ (\gamma,\left\{\lambda_{n}\right\},\kappa)\: 
\end{equation}
to denote $(\gamma, \left\{\lambda_n\right\},\kappa)$ summability. If $\lambda_n=n$, we write $(\gamma,\kappa)$ instead of $(\gamma, \left\{n\right\},\kappa)$, in accordance with Guha's notation \cite{guha}. As explained in \cite{guha}, the $(\gamma,\kappa)$ method is intimately connected with certain aspects of the theory of summability of trigonometric series. For instance, if $\kappa=1,2$, one obtains in (\ref{lseq2.3}) the functions 
$$
\gamma_{1}(x)=\frac{\sin x}{x} \ \ \ \mbox{and} \ \ \ \gamma_2(x)=\left(\frac{\sin (x/2)}{x/2}\right)^{2};
$$
so that $(\gamma,1)=(\mathfrak{R},1)$ and $(\gamma,2)=(\mathfrak{R},2)$. We recall that $(\mathfrak{R},1)$ and $(\mathfrak{R},2)$ stand for the Riemann summability methods \cite{hardy}. 

Lebesgue summability is of course closely related to the $(\gamma,1)$ method, but observe that the convergence of (\ref{lseq2}) is not part of the requirements for $(\gamma,1)$ summability. In analogy to the Lebesgue summability method, we say that $\sum_{n=0}^{\infty}c_{n}$ is $(\mathrm{L},\left\{\lambda_n\right\})$ summable to $\ell$ and write $\sum_{n=0}^{\infty}c_{n}=\ell$ $(\mathrm{L},\left\{\lambda_n\right\})$ if (\ref{lseq2.4}) holds with $\kappa=1$ and additionally 
\begin{equation}
\label{lseq2.5}
\sum_{0<\lambda_n } c_n \frac{e^{i\lambda_n h}}{\lambda_n h} \ \ \ \mbox{converges for small } |h|>0\: . 
\end{equation}
We point out that our convention for this generalization of Lebesgue summability is different from that proposed by Sz\'{a}sz in \cite[p. 394]{szasz1945}. (In fact, Sz\'{a}sz' notion coincides with what we call here $(\gamma, \left\{\lambda_n\right\},1)$ summability.)

We have

\begin{proposition}
\label{lsp1} Suppose that
\begin{equation}
\label{lseq2.6}
\sum_{\lambda_n\leq x}\lambda_n |c_n|=O(x)\:.
\end{equation}
 Then, the series $\sum_{n=0}^{\infty}c_n$ is $(\mathrm{L},\left\{\lambda_n\right\})$ summable if and only if it is $(\gamma,\left\{\lambda_n\right\},1)$ summable.
\end{proposition}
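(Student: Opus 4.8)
The plan is to dispose of one implication by unwinding the definitions and to concentrate all the real work on the other. By construction, $(\mathrm{L},\{\lambda_n\})$ summability to $\ell$ is exactly $(\gamma,\{\lambda_n\},1)$ summability to $\ell$ together with the extra demand that the series (\ref{lseq2.5}) converge for small $|h|>0$; consequently the implication from $(\mathrm{L},\{\lambda_n\})$ to $(\gamma,\{\lambda_n\},1)$ is immediate and value-preserving. The whole content of the proposition is therefore the reverse implication, namely that under (\ref{lseq2.6}) the convergence in (\ref{lseq2.5}) is automatic once $(\gamma,\{\lambda_n\},1)$ summability is known. To analyze (\ref{lseq2.5}) I would split the exponential as $e^{i\lambda_n h}/(\lambda_n h)=\cos(\lambda_n h)/(\lambda_n h)+i\,\gamma_1(\lambda_n h)$, using $\gamma_1(x)=\sin x/x$. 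Its imaginary part is, apart from the finitely many indices with $\lambda_n=0$, precisely $\sum_n c_n\gamma_1(\lambda_n h)$, which converges for small $h>0$ by hypothesis and hence, $\gamma_1$ being even in $h$, for small $|h|>0$. This reduces the question to the convergence of the cosine series $\sum_{0<\lambda_n}c_n\cos(\lambda_n h)/(\lambda_n h)$.

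The key step, where (\ref{lseq2.6}) enters, is to show that this cosine series converges absolutely. Because $|\cos(\lambda_n h)/(\lambda_n h)|\le 1/(\lambda_n|h|)$, it is enough to prove $\sum_{0<\lambda_n}|c_n|/\lambda_n<\infty$. I would deduce this from (\ref{lseq2.6}) by partial summation: putting $M(x)=\sum_{\lambda_n\le x}\lambda_n|c_n|=O(x)$ and writing $|c_n|/\lambda_n=\lambda_n|c_n|/\lambda_n^2$, an integration by parts gives $\int_{0^+}^{X}t^{-2}\,\mathrm{d}M(t)=M(X)/X^2+2\int_{0^+}^{X}t^{-3}M(t)\,\mathrm{d}t$, where the boundary term is $O(1/X)\to 0$ and the integral converges at infinity since $M(t)=O(t)$ forces the integrand to be $O(t^{-2})$. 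As the summands are nonnegative, this yields $\sum_{0<\lambda_n}|c_n|/\lambda_n<\infty$, and thus the cosine series converges absolutely for each fixed $h\ne 0$. Together with the imaginary part this shows that (\ref{lseq2.5}) converges for all small $|h|>0$, so $(\gamma,\{\lambda_n\},1)$ summability to $\ell$ upgrades to $(\mathrm{L},\{\lambda_n\})$ summability to the same $\ell$.

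I expect no genuine obstacle beyond organizing this bookkeeping correctly; the partial summation estimate is the crux, and everything else is routine. The only points calling for a little care are the exclusion of the $\lambda_n=0$ term from (\ref{lseq2.5}) (it contributes nothing to $M$, and to $\sum_n c_n\gamma_1(\lambda_n h)$ only the finite constant coming from $\gamma_1(0)=1$), and the use of the evenness of $\gamma_1$ in $h$ to transfer the convergence of the imaginary part of (\ref{lseq2.5}) from $h>0$ to $h<0$.
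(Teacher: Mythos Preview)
Your proof is correct and follows essentially the same route as the paper, which deduces absolute (indeed uniform) convergence of (\ref{lseq2.5}) from (\ref{lseq2.6}) via the partial-summation estimate $\sum_{0<\lambda_n}|c_n|/\lambda_n<\infty$ (this is exactly the content of Lemma~\ref{lsl1}). Your split into real and imaginary parts is unnecessary, though: since $|e^{i\lambda_n h}/(\lambda_n h)|=1/(\lambda_n|h|)$, the very bound you use for the cosine part already yields absolute convergence of the full series (\ref{lseq2.5}) directly, without invoking the $(\gamma,\{\lambda_n\},1)$ hypothesis for the sine part.
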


Thus, under condition (\ref{lseq5}), the trigonometric series (\ref{lseq1}) is Lebesgue summable at $x=x_{0}$ to $s(x_0)$ if and only if it is $(\mathfrak{R},1)$ $(=(\gamma,1))$ summable at $x=x_{0}$ to $s(x_0)$. Proposition \ref{lsp1} follows at once from the ensuing simple lemma, which guarantees the absolute and uniform convergence of (\ref{lseq2.5}) when (\ref{lseq2.6}) is assumed.

\begin{lemma}
\label{lsl1} The condition (\ref{lseq2.6}) is equivalent to
\begin{equation}
\label{lseq2.7}
\sum_{ x\leq  \lambda_n} \frac{|c_{n}|}{\lambda_n}=O\left(\frac{1}{x}\right)\:. 
\end{equation}
\end{lemma}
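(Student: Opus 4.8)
The plan is to prove both implications by Abel summation (Riemann--Stieltjes integration by parts), introducing the two monotone step functions that carry the information in (\ref{lseq2.6}) and (\ref{lseq2.7}). Put $A(x)=\sum_{\lambda_n\le x}\lambda_n|c_n|$ and $B(x)=\sum_{\lambda_n\ge x}|c_n|/\lambda_n$; then (\ref{lseq2.6}) reads $A(x)=O(x)$ and (\ref{lseq2.7}) reads $B(x)=O(1/x)$, both as $x\to\infty$. The bridge between them is the elementary identity $|c_n|/\lambda_n=(\lambda_n|c_n|)/\lambda_n^2$, which lets me rewrite each quantity as a Stieltjes integral of the other's generating measure against the weight $t^{\pm 2}$.

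For the implication (\ref{lseq2.6}) $\Rightarrow$ (\ref{lseq2.7}), I would write $B(x)=\int_{[x,\infty)}t^{-2}\,dA(t)$ and integrate by parts with the continuous factor $t^{-2}$, obtaining $B(x)=\lim_{T\to\infty}A(T)/T^{2}-A(x^{-})/x^{2}+\int_x^\infty 2A(t)t^{-3}\,dt$. The hypothesis $A(t)\le Ct$ forces the boundary term at infinity to vanish and bounds the remaining integral by $\int_x^\infty 2Ct\cdot t^{-3}\,dt=2C/x$; since $-A(x^{-})/x^{2}\le 0$, this yields $B(x)\le 2C/x$, which is (\ref{lseq2.7}).

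For the converse (\ref{lseq2.7}) $\Rightarrow$ (\ref{lseq2.6}), I would run the dual computation on the finite sum: order the indices with $0<\lambda_n\le x$ as $\lambda_{n_1}<\dots<\lambda_{n_k}$ and apply summation by parts, using that $|c_n|/\lambda_n$ is the increment of the tail function $B$. This expresses $A(x)$ as $\sum_{j}(\lambda_{n_j}^{2}-\lambda_{n_{j-1}}^{2})B(\lambda_{n_j})$ plus harmless boundary contributions, and the estimate $B(\lambda_{n_j})\le C/\lambda_{n_j}$ together with the algebraic inequality $\lambda_{n_j}^{2}-\lambda_{n_{j-1}}^{2}\le 2\lambda_{n_j}(\lambda_{n_j}-\lambda_{n_{j-1}})$ makes the sum telescope: each term is at most $2C(\lambda_{n_j}-\lambda_{n_{j-1}})$, so the total is $\le 2C\lambda_{n_k}\le 2Cx$. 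Hence $A(x)=O(x)$.

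The computations are routine once the right partial-summation identity is in place; the only points requiring care, and where I expect the main (mild) obstacle, are the bookkeeping of the boundary terms---checking that the term at infinity genuinely vanishes under the respective hypothesis and that the surviving boundary piece has the favorable sign---and the fact that the $O$-estimates are asymptotic, so the finitely many $\lambda_n$ below any fixed threshold must be discarded into the $O(x)$ (resp. $O(1/x)$) error. Throughout, the nonnegativity of all summands is what keeps these boundary and threshold adjustments from affecting the conclusion.
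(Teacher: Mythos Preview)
Your proposal is correct and follows essentially the same route as the paper: both arguments are Abel/Stieltjes integration by parts linking the partial sums $A(x)=\sum_{\lambda_n\le x}\lambda_n|c_n|$ and the tails $B(x)=\sum_{\lambda_n\ge x}|c_n|/\lambda_n$ through the weight $t^{\pm2}$. The only cosmetic difference is that the paper introduces the auxiliary step function $S(x)=\sum_{\lambda_n\le x}|c_n|$ and carries out both directions uniformly as Riemann--Stieltjes integrals against $dS$, whereas you handle the converse direction by a discrete summation-by-parts; the boundary and threshold issues you flag are exactly the ones that arise, and your handling of them (via nonnegativity and the asymptotic nature of the $O$-bounds) is sound.
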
   
\begin{proof} Write $S(x)= \sum_{\lambda_n\leq x}|c_{n}|$ for $x>0$ and $S(0)=0$. The conditions (\ref{lseq2.6}) and (\ref{lseq2.7}) take the form
 \begin{equation}
\label{lseq2.8}
T_{1}(x):=\int_{0}^{x}t\:\mathrm{d}S(t)=O\left(x\right) 
\end{equation}
and
\begin{equation}
\label{lseq2.9}
T_{2}(x):=\int_{x}^{\infty}t^{-1}\mathrm{d}S(t)=O\left(\frac{1}{x}\right)\:, 
\end{equation} 
respectively. Assume (\ref{lseq2.8}). Notice that
$$
\int_{x}^{y}t^{-1}\mathrm{d}S(t)= \int_{x}^{y} t^{-2}\mathrm{d} T_{1}(t)= \frac{T_1(y)}{y^{2}}-\frac{T_1(x)}{x^{2}}+2\int_{x}^{y} \frac{T_{1}(t)}{t^{3}}\:\mathrm{d}t\:.
$$
Taking $y\to\infty$, we obtain that
$$
\int_{x}^{\infty}t^{-1}\mathrm{d}S(t)=-\frac{T_1(x)}{x^{2}}+2\int_{x}^{\infty} \frac{T_{1}(t)}{t^{3}}\:\mathrm{d}t\: =O\left(\frac{1}{x}\right)\:.
$$
Suppose now that (\ref{lseq2.9}) holds. Since 
$$
T_{3}(x):=\int_{(x,\infty)} t^{-1}\mathrm{d} S(t)\leq T_{2}(x)=O(1/x)\:,
$$ 
we have
$$
\int_{0}^{x}t\:\mathrm{d}S(t)= - \int_{0}^{x}t^{2}\mathrm{d}T_{3}(t)=-x^{2}T_{3}(x)+2\int_{0}^{x}tT_{3}(t)\:\mathrm{d}t= O(x)\:,  
$$
as required.
\end{proof}

\section{Main result}
\label{The main result}
We are now in the position to state our main result:
\begin{theorem}
\label{lsth4}
If the condition $(\ref{lseq2.6})$ holds, then the following six statements are equivalent. The series $\sum_{n=0}^{\infty}c_{n}$ is:
\begin{itemize}
\item [(a)]  $(\mathrm{L}, \left\{\lambda_{n}\right\})$ summable to $\ell$.
\item [(b)]  $(\gamma, \left\{\lambda_{n}\right\},\kappa)$ summable to $\ell$ for some $\kappa\geq 1$.
\item [(c)] $(\gamma, \left\{\lambda_{n}\right\},\kappa)$ summable to $\ell$ for all $\kappa\geq 1$.
\item [(d)] $(\mathrm{R}, \left\{\lambda_{n}\right\},\beta)$ summable to $\ell$ for some $\beta> 0$.
\item [(e)] $(\mathrm{R}, \left\{\lambda_{n}\right\},\beta)$ summable to $\ell$ for all $\beta>0$.
\item [(f)] $(\mathrm{A}, \left\{\lambda_{n}\right\})$ summable to $\ell$.
\end{itemize}

\end{theorem}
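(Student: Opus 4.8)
The plan is to route all six notions through a single distributional statement---the existence of a \L ojasiewicz point value $f(0)=\ell$ for a suitable boundary distribution---and then read off the equivalences from the Abelian and Tauberian characterizations of point values in \cite{vindas-estrada2007,estrada-vindas2012Taub,vindas-estrada2010Inv}. First I would record that the tail bound (\ref{lseq2.7}) of Lemma \ref{lsl1} yields $\sum_{n}|c_{n}|/\lambda_{n}<\infty$ and $|F(x+iy)|=O(1/y)$ for
$$
F(z)=\sum_{n=0}^{\infty}c_{n}e^{i\lambda_{n}z},\qquad \operatorname{Im}z>0,
$$
so that $F$ is holomorphic on the upper half-plane and admits a boundary value $f=\sum_{n}c_{n}e^{i\lambda_{n}x}\in\mathcal{S}'(\mathbb{R})$ as $y\to 0^{+}$. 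Each mean in the theorem is a rescaled regularization of the formal value of $f$ at the origin: $F(iy)$ is the $(\mathrm{A},\{\lambda_n\})$-mean, $\sum_{\lambda_n\le x}c_n(1-\lambda_n/x)_+^{\beta}$ is the $(\mathrm{R},\{\lambda_n\},\beta)$-mean, and, writing $\gamma_{\kappa}(u)=\kappa\int_{0}^{1}(1-s)^{\kappa-1}\cos(us)\,\mathrm{d}s$, the $(\gamma,\{\lambda_n\},\kappa)$-mean $\sum_n c_n\gamma_\kappa(\lambda_n h)$ is a cosine-window average of $f$ at scale $h$ with window $\kappa(1-s)^{\kappa-1}$ on $[0,1]$. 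The natural unifying assertion is that $f$ has the distributional point value $f(0)=\ell$.

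The reductions are then immediate. By Proposition \ref{lsp1}, condition (\ref{lseq2.6}) makes (a) equivalent to the instance $\kappa=1$ of the $\gamma$-family, so (a) needs no separate treatment; and (c)$\Rightarrow$(b), (e)$\Rightarrow$(d) are trivial. It therefore suffices to prove two halves: (\emph{Abelian}) $f(0)=\ell$ implies (c), (e), and (f); and (\emph{Tauberian}) each of (b), (d), (f) implies $f(0)=\ell$. Chaining these with the trivial implications closes the six-way equivalence.

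The Abelian half is the soft direction and needs no growth hypothesis. Each kernel above is the cosine (or Laplace) transform of an integrable averaging window of total mass one---$\kappa(1-s)^{\kappa-1}$ for $\gamma_\kappa$, $(1-s)_+^{\beta}$ for Riesz, $e^{-s}$ for Abel---so the convergence of the corresponding means to the point value $f(0)=\ell$, for every $\kappa\ge1$ and every $\beta>0$, is exactly the Abelian content of the point-value characterization of \cite{vindas-estrada2007}; the Abel case (f) is its boundary-value form, since $f(0)=\ell$ forces the radial limit $\lim_{y\to0^{+}}F(iy)=\ell$.

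The Tauberian half is the crux, and it is here that (\ref{lseq2.6}) enters as the Tauberian condition: summability of $\sum_n c_n$ to $\ell$ by any single one of these windows, together with the oscillation control encoded in (\ref{lseq2.6}) (equivalently (\ref{lseq2.7})), forces the point value $f(0)=\ell$ through the Tauberian theorem for distributional point values of \cite{estrada-vindas2012Taub,vindas-estrada2010Inv}. I expect the main obstacles to be twofold. First, one must verify that (\ref{lseq2.6}) translates precisely into the distributional Tauberian hypothesis required by that theorem. Second, one must check kernel admissibility---most notably that the threshold is $\kappa\ge1$ rather than $\kappa>0$, which is accounted for by the window $\kappa(1-s)^{\kappa-1}$ being bounded exactly when $\kappa\ge1$, boundedness being what the Tauberian passage exploits. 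With both halves in place, the equivalence of (a)--(f) follows.
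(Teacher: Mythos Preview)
Your proposed split into an ``Abelian half'' and a ``Tauberian half'' misidentifies where the hypothesis (\ref{lseq2.6}) is actually needed, and the Abelian half as you state it is false. A distributional point value $f(0)=\ell$ (or, more correctly here, the \emph{symmetric} point value $f_{\textnormal{sym}}(0)=\ell$, since the $\gamma_\kappa$ kernels only probe the even part $\chi_0$) comes with an \emph{order} $k$: it is equivalent to $(\mathrm{R},\{\lambda_n\},\beta)$ summability for all $\beta\ge k$, not for all $\beta>0$. The Riesz kernel $(1-s)_+^\beta$ and the window $\kappa(1-s)^{\kappa-1}$ are not smooth test functions, so the soft Abelian argument you invoke does not apply to them; one only gets the corresponding means to converge when their regularity matches the order of the point value. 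Concretely, a series can be $(\mathrm{C},2)$ summable (hence have a point value of order~$2$) without being $(\mathrm{C},1)$ summable, so ``$f(0)=\ell\Rightarrow$ (e)'' fails without (\ref{lseq2.6}). The same obstruction blocks ``$f(0)=\ell\Rightarrow$ (c)''.

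The paper's proof is organized differently and avoids this pitfall. The genuinely Tauberian step---that under $\sum_{\lambda_n\le x}\lambda_n c_n=O(x)$ the three statements (d), (e), (f) coincide---is quoted from \cite[Cor.~4.16]{estrada-vindas2012Taub}. The implication (b)$\Rightarrow$(d) does go through symmetric point values, but only after an internal Abelian lemma (the Claim) showing $(\gamma,\{\lambda_n\},\kappa)\Rightarrow(\gamma,\{\lambda_n\},\tau)$ for $\tau\ge\kappa$, which lets one take $\kappa=k$ an integer; then $h^k G_k(h)/k!$ is literally a $k$-th primitive of $\chi_0$, so $(\gamma,\{\lambda_n\},k)$ summability \emph{is} the statement $f_{\textnormal{sym}}(0)=\ell$ of order $k$, and \cite[Thm.~5.18]{p-s-v} yields (d). The remaining implication (e)$\Rightarrow$(c) is not handled via point values at all: the paper gives a direct estimate showing $(\mathrm{R},\{\lambda_n\},1)\Rightarrow(\gamma,\{\lambda_n\},1)$ by splitting the sum at $\lambda_n\approx\mu/h$, integrating by parts twice, and using both (\ref{lseq2.6}) and Lemma~\ref{lsl1}. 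Your outline is salvageable only if you insert a step showing that (\ref{lseq2.6}) forces any existing point value to have order at most~$1$; but that is precisely the hard Tauberian content you were hoping to outsource, and you have placed it on the wrong side of your dichotomy.
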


Before giving a proof of Theorem \ref{lsth4}, we would like to discuss two corollaries of it. It is well known that any of the following three assumptions is a Tauberian condition for $(\mathrm{A}, \left\{\lambda_{n}\right\})$ summability, and hence for Riesz $(\mathrm{R}, \left\{\lambda_{n}\right\},\beta)$ summability,
\begin{equation}
\label{lseq3.1}
c_n=O\left(\frac{\lambda_{n}-\lambda_{n-1}}{\lambda_{n}}\right)\:,
\end{equation}

\begin{equation}
\label{lseq3.2}
\sum^{\infty}_{n=1}\left(\frac{\lambda_n}{\lambda_{n}-\lambda_{n-1}}\right)^{p-1}\left|c_n\right|^{p}<\infty \ \ \ (1<p<\infty)
\:,
\end{equation}

\begin{equation}
\label{lseq3.3}
\sum^{N}_{n=1}\lambda_n^{p}\left(\lambda_{n}-\lambda_{n-1}\right)^{1-p}\left|c_n\right|^{p}=O(\lambda_N) \ \ \ (1<p<\infty)\:.
\end{equation}
Indeed, that convergence follows from $(\mathrm{A}, \left\{\lambda_{n}\right\})$ summability under (\ref{lseq3.1}) was first shown by Ananda Rau in \cite{ananda-rau} (see also \cite{hardy,vindas-estrada2008Taub}).  The Tauberian theorem related to (\ref{lseq3.2}) belongs to Hardy and Littlewood, while the one with the Tauberian condition (\ref{lseq3.3}) to Sz\'{a}sz (see \cite[Sect. 5]{estrada-vindas2012Taub} for quick proofs of these two Tauberian theorems).

We can deduce from Theorem \ref{lsth4} the following Tauberian theorem for $(\gamma, \left\{\lambda_{n}\right\},\kappa)$ summability.

\begin{corollary}
\label{lsc1} Let $\kappa\geq1$. Suppose that
$$
\sum_{n=0}^{\infty}c_{n}=\ell \ \ \ (\gamma, \left\{\lambda_{n}\right\},\kappa)\:.
$$
Then, any of the Tauberian conditions (\ref{lseq3.1}), (\ref{lseq3.2}), or (\ref{lseq3.3})  implies that $\sum_{n=0}^{\infty}c_{n}$ is convergent to $\ell$. 
\end{corollary}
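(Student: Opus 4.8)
The plan is to reduce the statement to the Tauberian theorems for the extended Abel method $(\mathrm{A},\{\lambda_n\})$ recalled just before the corollary. The crucial observation is that each of the three Tauberian conditions (\ref{lseq3.1}), (\ref{lseq3.2}), (\ref{lseq3.3}) forces the growth hypothesis (\ref{lseq2.6}); once (\ref{lseq2.6}) is available, Theorem \ref{lsth4} guarantees that $(\gamma,\{\lambda_n\},\kappa)$ summability to $\ell$ (statement (b)) is equivalent to $(\mathrm{A},\{\lambda_n\})$ summability to $\ell$ (statement (f)). The conclusion then follows at once, because each of (\ref{lseq3.1}), (\ref{lseq3.2}), (\ref{lseq3.3}) is a Tauberian condition that upgrades Abel summability to ordinary convergence.

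Thus the heart of the matter is the implication that any of (\ref{lseq3.1}), (\ref{lseq3.2}), (\ref{lseq3.3}) entails (\ref{lseq2.6}), which I would treat case by case. For (\ref{lseq3.1}) the bound is immediate, since $\lambda_n|c_n|=O(\lambda_n-\lambda_{n-1})$ and hence $\sum_{\lambda_n\leq x}\lambda_n|c_n|=O\bigl(\sum_{\lambda_n\leq x}(\lambda_n-\lambda_{n-1})\bigr)=O(x)$ by telescoping. For (\ref{lseq3.3}) I would apply H\"{o}lder's inequality with exponents $p$ and $p'=p/(p-1)$ to the factorization $\lambda_n|c_n|=\bigl[\lambda_n|c_n|(\lambda_n-\lambda_{n-1})^{(1-p)/p}\bigr](\lambda_n-\lambda_{n-1})^{(p-1)/p}$; the $p$-th power of the first factor is precisely the summand in (\ref{lseq3.3}), while the $p'$-th power of the second factor equals $\lambda_n-\lambda_{n-1}$, so that $\sum_{\lambda_n\leq x}\lambda_n|c_n|=O(x^{1/p})\,O(x^{1/p'})=O(x)$. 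Finally, (\ref{lseq3.2}) reduces to (\ref{lseq3.3}): denoting by $d_n\geq 0$ the summand of (\ref{lseq3.2}), one has $\sum_n d_n<\infty$, and since the summand of (\ref{lseq3.3}) is $\lambda_n d_n$ with $\{\lambda_n\}$ increasing, $\sum_{n=1}^{N}\lambda_n d_n\leq \lambda_N\sum_{n=1}^{N} d_n=O(\lambda_N)$.

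With (\ref{lseq2.6}) secured in all three cases, I would invoke the equivalence (b) $\Leftrightarrow$ (f) of Theorem \ref{lsth4} to pass from $(\gamma,\{\lambda_n\},\kappa)$ summability to $(\mathrm{A},\{\lambda_n\})$ summability to the same value $\ell$, and then apply the pertinent classical Tauberian theorem---that of Ananda Rau for (\ref{lseq3.1}), of Hardy and Littlewood for (\ref{lseq3.2}), and of Sz\'{a}sz for (\ref{lseq3.3})---to conclude that $\sum_{n=0}^{\infty}c_n=\ell$ in the ordinary sense.

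The only step demanding genuine care is the verification that the Tauberian conditions entail (\ref{lseq2.6}), and among these the H\"{o}lder estimate for (\ref{lseq3.3}) is the most delicate; the remainder is a formal combination of Theorem \ref{lsth4} with results already established in the text, so I expect the write-up to be brief.
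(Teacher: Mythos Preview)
Your proposal is correct and follows essentially the same route as the paper's own proof: verify that each of (\ref{lseq3.1})--(\ref{lseq3.3}) implies (\ref{lseq2.6}), invoke Theorem~\ref{lsth4} to pass to $(\mathrm{A},\{\lambda_n\})$ summability, and then apply the classical Tauberian theorems. The paper merely asserts that (\ref{lseq3.2}) and (\ref{lseq3.3}) yield (\ref{lseq2.6}) ``as a straightforward application of the H\"{o}lder inequality,'' whereas you have spelled out the factorization for (\ref{lseq3.3}) and reduced (\ref{lseq3.2}) to (\ref{lseq3.3}); these details are correct and only add clarity.
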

\begin{proof}
Clearly, (\ref{lseq3.1}) yields (\ref{lseq2.6}). Furthermore, any of the two conditions (\ref{lseq3.2}) or (\ref{lseq3.3}) also implies (\ref{lseq2.6}), as a straightforward application of the H\"{o}lder inequality shows. By Theorem \ref{lsth4}, we obtain that the series is $(\mathrm{A}, \left\{\lambda_{n}\right\})$ summable to $\ell$. Consequently, the desired convergence conclusion follows from the corresponding Tauberian theorem for $(\mathrm{A}, \left\{\lambda_{n}\right\})$ summability. 
\end{proof}

Combining Corollary \ref{lsc1} and Theorem \ref{lsth4}, we obtain the ensuing extension of Zygmund's result (Theorem \ref{lsth1}). 

\begin{corollary}
\label{lsc2} Assume any of the conditions (\ref{lseq3.1})--(\ref{lseq3.3}). Then, $\sum_{n=0}^{\infty}c_{n}$ is $(\mathrm{L}, \left\{\lambda_{n}\right\})$ summable to $\ell$ if and only if it is convergent to $\ell$.
\end{corollary}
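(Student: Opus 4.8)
The plan is to deduce both implications directly from Theorem~\ref{lsth4} together with Corollary~\ref{lsc1}, exploiting the fact that each of the hypotheses (\ref{lseq3.1}), (\ref{lseq3.2}), (\ref{lseq3.3}) already forces the basic condition (\ref{lseq2.6}). Indeed, as recorded in the proof of Corollary~\ref{lsc1}, (\ref{lseq3.1}) yields (\ref{lseq2.6}) at once, while (\ref{lseq3.2}) and (\ref{lseq3.3}) do so after a routine application of H\"{o}lder's inequality. Hence, under any of the three assumptions, the six summability notions (a)--(f) of Theorem~\ref{lsth4} are all equivalent, and the whole statement becomes a matter of inserting the correct links into this chain.

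For the direction asserting that $(\mathrm{L},\{\lambda_n\})$ summability to $\ell$ implies ordinary convergence to $\ell$, I would argue as follows. By the equivalence of statements (a) and (b) in Theorem~\ref{lsth4}, $(\mathrm{L},\{\lambda_n\})$ summability to $\ell$ gives $(\gamma,\{\lambda_n\},\kappa)$ summability to $\ell$ for some $\kappa\geq 1$. Since one of (\ref{lseq3.1})--(\ref{lseq3.3}) is in force, Corollary~\ref{lsc1} applies verbatim and delivers the convergence of $\sum_{n=0}^{\infty}c_n$ to $\ell$.

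For the converse, I would first promote ordinary convergence to extended Abel summability. If $\sum_{n=0}^{\infty}c_n=\ell$ in the ordinary sense, then the partial sums $s_N$ are bounded and $s_N\to\ell$; summation by parts rewrites $\sum_{n=0}^{\infty}c_n e^{-\lambda_n y}$ as $\sum_{n=0}^{\infty}s_n\bigl(e^{-\lambda_n y}-e^{-\lambda_{n+1}y}\bigr)$ (the boundary term $s_N e^{-\lambda_N y}$ vanishing because $\lambda_N\to\infty$), and the nonnegative weights $e^{-\lambda_n y}-e^{-\lambda_{n+1}y}$ sum to $e^{-\lambda_0 y}\to 1$ as $y\to 0^{+}$. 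This is precisely the Toeplitz--Silverman regularity condition for the associated kernel, so the extended Abel method is regular; in particular the defining series converges for $y>0$ and $\sum_{n=0}^{\infty}c_n=\ell$ $(\mathrm{A},\{\lambda_n\})$. The equivalence of statements (a) and (f) in Theorem~\ref{lsth4} then yields $(\mathrm{L},\{\lambda_n\})$ summability to $\ell$.

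I do not expect a genuine technical obstacle here: once (\ref{lseq2.6}) is secured, the statement is a formal consequence of the equivalence theorem. The only points requiring care are the regularity of the extended Abel method, which is classical, and the conceptual subtlety that $(\mathrm{L},\{\lambda_n\})$ summability is \emph{not} regular on its own---so convergence can be upgraded to $(\mathrm{L},\{\lambda_n\})$ summability only through the Tauberian hypothesis (\ref{lseq3.1})--(\ref{lseq3.3}), which is exactly what activates the full chain of equivalences.
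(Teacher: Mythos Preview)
Your argument is correct and matches the paper's intent: the paper does not write out a separate proof for Corollary~\ref{lsc2} but simply says it follows by combining Corollary~\ref{lsc1} with Theorem~\ref{lsth4}, which is exactly the chain you spell out (with the classical regularity of the extended Abel method supplying the ``convergence $\Rightarrow$ (f)'' link). One minor streamlining: for the forward direction you could skip the detour through Theorem~\ref{lsth4}(a)$\Leftrightarrow$(b) and invoke Corollary~\ref{lsc1} directly with $\kappa=1$, since Proposition~\ref{lsp1} already identifies $(\mathrm{L},\{\lambda_n\})$ with $(\gamma,\{\lambda_n\},1)$ under~(\ref{lseq2.6}).
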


We now set the ground for the proof of Theorem \ref{lsth4}. The space $\mathcal{S}'(\mathbb{R})$ denotes the well known Schwartz space of tempered distributions \cite{estrada-kanwal,p-s-v}. We will make use of the notion of distributional point values, introduced by \L ojasiewicz in \cite{lojasiewicz}. A distribution $f\in\mathcal{S}'(\mathbb{R})$ is said to have a distributional point value $\ell$ of order $k\in\mathbb{N}$ at the point $x=x_0$ if there is a locally bounded function $F$ such that $F^{(k)}=f$ near $x=x_0$ and 
\begin{equation*}
\lim_{x\rightarrow x_{0}}\frac{k!F(x)}{\left(  x-x_{0}\right)  ^{k}}=\ell\: .
\end{equation*} 
In such a case one writes $f(x_0)=\ell$, distributionally of order $k$. 

We are more interested in the closely related notion of (distributionally) symmetric point values and its connection with the Fourier inversion formula for tempered distributions \cite[Sect. 6]{vindas-estrada2010Inv} (cf. \cite[Chap. 5]{p-s-v}). We say that $f$ has a symmetric point value $\ell$ of order $k$ at $x=x_0$ and write $f_{\textnormal{sym}}(x_0)=\ell$, distributionally of order $k$, if the distribution

\begin{equation}
\label{lseq3.4}
\chi_{x_0}(h):= \frac{f(x_0+h)+f(x_0-h)}{2}
\end{equation}
satisfies $\chi_{x_0}(0)=\ell$, distributionally of order $k$. One can show \cite[Thm. 5.18]{p-s-v} that $f_{\textnormal{sym}}(x_0)=\ell$, distributionally, if and only if the pointwise Fourier inversion formula
\begin{equation}
\label{lseq3.5}
\frac{1}{2\pi}\:\mathrm{p.v.}\left\langle \hat{f}(u),e^{ix_0u}\right\rangle=\ell \ \ \  (\mathrm{C}, \beta)\: 
\end{equation}
holds for some $\beta\geq 0$. The left hand side of (\ref{lseq3.5}) denotes a principal value distributional evaluation in the Ces\`{a}ro sense, explained, e.g., in \cite[Sect. 5.2.8]{p-s-v}. Under additional assumptions on the growth order of $f$ at $\pm\infty$, it is possible to establish a more precise link between the order of summability $\beta$ and the order of the symmetric point value \cite{vindas-estrada2010Inv}. We refer to \cite{estrada-vindas2010,estrada-vindas2012Taub,estrada-vindas2013Taub,p-s-v,vindas-estrada2007,vindas-estrada2010Inv} for studies about the interplay between local behavior of distributions and summability of series and integrals.

We now proceed to show our main result.

\begin{proof}[Proof of Theorem \ref{lsth4}] The equivalence between (d), (e), and (f) has been established by Estrada and the author in \cite[Cor. 4.16]{estrada-vindas2012Taub} under the still weaker assumption
$$
\sum_{\lambda_{n}\leq x}\lambda_{n}c_{n}=O(x)\:.
$$
(The case $\lambda_n=n$ of this result is due to Jakimovski \cite[Thm. 6.2]{jakimovski}.)
Taking Proposition \ref{lsp1} into account, it therefore suffices to show the implications (b)$\Rightarrow$(d) and  (e)$\Rightarrow$(c). 
We first need to show the following claim:

\begin{claim}\label{lsclaim} Let $\kappa\geq1$. Under the assumption (\ref{lseq2.6}),
\begin{equation}
\label{lseq3.6}
 \sum_{n=0}^{\infty}c_n=\ell \ \ \ (\gamma,\left\{\lambda_n\right\},\kappa) \ \  \Longrightarrow  \ \  \sum_{n=0}^{\infty}c_n=\ell \ \ \ (\gamma,\left\{\lambda_n\right\},\tau) \ \ \ \mbox{for }\tau\geq \kappa\: .
\end{equation}
\end{claim}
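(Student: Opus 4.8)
The plan is to realize $\gamma_{\tau}$ as a fixed probability average of $\gamma_{\kappa}$ and then transfer the limit $h\to 0^{+}$ through the average. The crucial observation is an integral representation valid for $\tau>\kappa$ (the case $\tau=\kappa$ being trivial). Writing the Young functions as Riemann--Liouville fractional integrals of $\cos$, namely $x^{\kappa}\gamma_{\kappa}(x)=\Gamma(\kappa+1)\,(I^{\kappa}\cos)(x)$ where $(I^{\alpha}f)(x)=\frac{1}{\Gamma(\alpha)}\int_{0}^{x}(x-t)^{\alpha-1}f(t)\,\mathrm{d}t$, the semigroup identity $I^{\tau}=I^{\tau-\kappa}I^{\kappa}$ yields, after the substitution $u=xs$,
\[
\gamma_{\tau}(x)=\frac{1}{\mathrm{B}(\kappa+1,\tau-\kappa)}\int_{0}^{1}(1-s)^{\tau-\kappa-1}s^{\kappa}\gamma_{\kappa}(xs)\,\mathrm{d}s\:.
\]
(One may instead verify this identity directly by inserting the definition (\ref{lseq2.3}) of $\gamma_{\kappa}$ and applying Fubini to the resulting double integral.) Note that $\mathrm{d}\mu(s):=\mathrm{B}(\kappa+1,\tau-\kappa)^{-1}(1-s)^{\tau-\kappa-1}s^{\kappa}\,\mathrm{d}s$ is a probability measure on $(0,1)$.

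Next I would insert $x=\lambda_{n}h$ into this representation and sum against $c_{n}$. To interchange the summation with the $s$-integration I rely on condition (\ref{lseq2.6}). Since $\kappa\geq 1$, integration by parts in (\ref{lseq2.3}) gives the decay $|\gamma_{\kappa}(y)|\leq C(1+y)^{-1}$, whence for $\lambda_{n}h\geq 1$ one estimates $\int_{0}^{1}(1-s)^{\tau-\kappa-1}s^{\kappa}|\gamma_{\kappa}(\lambda_{n}hs)|\,\mathrm{d}s=O\bigl((\lambda_{n}h)^{-1}\bigr)$, the factor $s^{\kappa-1}$ staying integrable because $\kappa\geq 1>0$. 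Splitting the sum at $\lambda_{n}\sim 1/h$, the head is a finite sum and the tail is controlled by $\tfrac{C}{h}\sum_{\lambda_{n}>1/h}|c_{n}|/\lambda_{n}=O(1)$, where the last bound is exactly the reformulation (\ref{lseq2.7}) of (\ref{lseq2.6}) supplied by Lemma \ref{lsl1}. This justifies Fubini (and already gives absolute convergence of $\sum_{n}c_{n}\gamma_{\tau}(\lambda_{n}h)$ for every $h>0$) and produces
\[
\sum_{n=0}^{\infty}c_{n}\gamma_{\tau}(\lambda_{n}h)=\int_{0}^{1}\Bigl(\sum_{n=0}^{\infty}c_{n}\gamma_{\kappa}(\lambda_{n}hs)\Bigr)\,\mathrm{d}\mu(s)\:.
\]

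Finally, I would let $h\to 0^{+}$. By hypothesis the inner sum $F_{\kappa}(u):=\sum_{n}c_{n}\gamma_{\kappa}(\lambda_{n}u)$ tends to $\ell$ as $u\to 0^{+}$, so for each fixed $s\in(0,1)$ the integrand $F_{\kappa}(hs)\to\ell$; moreover $F_{\kappa}$ is bounded on some interval $(0,\delta)$, so for $h<\delta$ the integrand is dominated by the constant $|\ell|+1$, which is $\mu$-integrable since $\mu$ is a probability measure. Dominated convergence then gives $\sum_{n}c_{n}\gamma_{\tau}(\lambda_{n}h)\to\ell\cdot\mu((0,1))=\ell$, which is precisely $(\gamma,\{\lambda_{n}\},\tau)$ summability to $\ell$.

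The main obstacle is the second step: everything hinges on controlling the tail of the series uniformly in $h$, and this is exactly where condition (\ref{lseq2.6})---in its equivalent form (\ref{lseq2.7})---and the hypothesis $\kappa\geq 1$ (needed for the decay $\gamma_{\kappa}(y)=O(1/y)$) enter decisively. Once the representation as an average of $F_{\kappa}$ against a fixed probability measure is in place, the passage to the limit is routine.
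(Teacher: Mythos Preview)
Your proposal is correct and follows essentially the same route as the paper's proof: the same integral representation $\gamma_{\tau}(x)=\mathrm{B}(\kappa+1,\tau-\kappa)^{-1}\int_{0}^{1}(1-s)^{\tau-\kappa-1}s^{\kappa}\gamma_{\kappa}(xs)\,\mathrm{d}s$, the same Fubini step justified via (\ref{lseq2.7}) together with the decay $\gamma_{\kappa}(x)=O(1/x)$, and the same passage to the limit. Your write-up is in fact a bit more explicit than the paper's in two places---you derive the representation via the Riemann--Liouville semigroup identity and you spell out the dominated convergence argument for the final $h\to0^{+}$ step---but the underlying argument is the same.
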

\begin{proof}[Proof of Claim \ref{lsclaim}] Set $G_{\tau}(h)=\sum_{n=0}^{\infty}c_n\gamma_{\tau}(\lambda_n h)$. Lemma \ref{lsl1} ensures that all these series are absolutely convergent for $h>0$ if $\tau\geq 1$. Let $\tau>\kappa$. Since 
$$\Gamma(\kappa+1)x^{\tau}\gamma_{\tau}(x)=\frac{\Gamma(\tau+1)}{\Gamma(\tau-\kappa)}\int_{0}^{x}(x-t)^{\tau-\kappa-1}t^{\kappa}\gamma_{\kappa}(t)\:\mathrm{d}t\:, $$
we have
\begin{align*}
G_{\tau}(h)&
=\frac{\Gamma(\tau+1)}{\Gamma(\kappa+1)\Gamma(\tau-\kappa)h^{\tau}}\sum_{n=0}^{\infty}c_n\int_{0}^{h}(h-t)^{\tau-\kappa-1}t^{\kappa}\gamma_{\kappa}(\lambda_nt)\:\mathrm{d}t
\\
&
=\frac{\Gamma(\tau+1)}{\Gamma(\kappa+1)\Gamma(\tau-\kappa)h^{\tau}}\int_{0}^{h}(h-t)^{\tau-\kappa-1}t^{\kappa}G_{\kappa}(t) \:\mathrm{d}t
\\
&
=
\frac{\Gamma(\tau+1)}{\Gamma(\kappa+1)\Gamma(\tau-\kappa)}\int_{0}^{1}(1-t)^{\tau-\kappa-1}t^{\kappa}G_{\kappa}(ht) \:\mathrm{d}t
\\
&=\ell+o(1)\:, \ \ \ h\to0^{+}\:, 
\end{align*}
where we have used Lemma \ref{lsl1} and the bound $\gamma_{\kappa}(x)=O(1/x)$ to exchange integration and summation in the second equality.
\end{proof}

We aboard the proof of $(b)\Rightarrow(d)$. Define the tempered distribution
$$
f(x)=\sum_{n=0}^{\infty}c_n e^{i\lambda_nx}\:.
$$ 
By (\ref{lseq3.6}), we can assume that the series is $(\gamma,{\lambda_n},k)$ summable to $\ell$ for an integer $k\geq1$, namely,
\begin{equation}
\label{lseq3.7}
F(h):=\frac{h^{k}}{k!}\sum_{n=0}^{\infty}c_n \gamma_{k}(\lambda_nh)= \ell\:\frac{h^{k}}{k!}+o(|h|^{k})\:, \ \ \ h\to0\: .
\end{equation}
It is clear that $F^{(k)}=\chi_{0}$, where $\chi_{0}$ is the distribution given by (\ref{lseq3.4}). Thus, (\ref{lseq3.7}) leads to the conclusion $f_{\textnormal{sym}}(0)=\ell$, distributionally of order $k$. Therefore, applying \cite[Thm 5.18]{p-s-v}, we obtain
\begin{align*}
\ell&= \frac{1}{2\pi}\:\mathrm{p.v.}\left\langle \hat{f}(u),1\right\rangle \ \ \ \ \  (\mathrm{C}, \beta)
\\
&
=
\lim_{x\to\infty}\sum_{\lambda_n\leq x}c_n \ \ \ \ \  (\mathrm{C}, \beta)
\\
&
= \sum_{n=0}^{\infty}c_n \ \ \ \ \ (\mathrm{R},\left\{\lambda_n\right\},\beta)\:,
\end{align*}
for some $\beta>0$. (It actually follows from the stronger result \cite[Thm. 6.7]{vindas-estrada2010Inv} that this holds for every $\beta>k$.) Hence, the summability (d) has been established.

We now prove (e)$\Rightarrow$(c). We will actually show that if $\sum_{n=0}^{\infty}c_n$ is $(\mathrm{R},\left\{\lambda_n\right\},1)$ summable to $\ell$, then the series is $(\gamma,\left\{\lambda_n\right\},1)$ summable. By (\ref{lseq3.6}), (c) will automatically follow. We may assume that $\ell=0$. Set $S(x)=\sum_{\lambda_n\leq x}c_n$ for $x>0$ and $S(0)=0$. Our assumption is then
\begin{equation*}
S_1(x)= \int_{0}^{x}S(t)\:\mathrm{d}t= o(x)\:, \ \ \ x\to\infty\: . 
\end{equation*}
Employing (\ref{lseq2.6}), we obtain
\begin{equation*}
|S(x)|= \left|\frac{S_{1}(x)}{x}+ \frac{1}{x}\int_{0}^{x}t\:\mathrm{d}S(t)\right|= O\left(1\right)\:.
\end{equation*}
Let $\mu$  and $y$ be two positive numbers to be chosen later. We keep $h<\mu/y$. Write 
$$
\sum_{n=0}^{\infty}c_n \gamma_{1}(\lambda_{n}h)= \left(\sum_{\lambda_{n}\leq \mu/h}+\sum_{\mu/h<\lambda_{n}}\right)c_n \gamma_{1}(\lambda_{n}h)=:I_{1}(h,\mu)+ I_{2}(h,\mu)\:.
$$
By using Lemma \ref{lsl1}, we can estimate $I_{2}(h,\mu)$ as 
\begin{equation*}
|I_{2}(h,\mu)|\leq \frac{1}{h}\sum_{\mu/h<\lambda_{n}} \frac{|c_n|}{\lambda_{n}}< \frac{C_1}{\mu}\:,
\end{equation*} 
where $C_1$ does not depend on $h$. Integrating by parts twice, we get
\begin{align*}
I_{1}(h,\mu)&= \left(\gamma_{1}(\mu) S(\mu/h)-h\gamma_{1}'(\mu)S_1({\mu/h})\right)+h^{2}\left(\int_{0}^{y}+\int_{y}^{\mu/h}\right)S_{1}(t)\gamma_1''(ht)\: \mathrm{d}t 
\\
& 
=: I_{1,1}(h,\mu)+h^{2}I_{1,2}(h,\mu,y)+h^{2}I_{1,3}(h,\mu,y)\: .
\end{align*} 
We can find constants $C_2,C_3,C_4,C_5>0$, independent of $h$, $\mu$, and $y$, such that
\begin{equation*}
|I_{1,1}(h,\mu)|< \frac{C_{2}}{\mu}+ C_3 \frac{\left|S_{1}(\mu/h)\right|}{\mu/h}\:,
\end{equation*}

\begin{equation*} |I_{1,2}(h,\mu,y)|< C_4 h^{2}y^{2}\:,
\end{equation*}
and 
\begin{equation*}
|I_{1,3}(h,\mu,y)|< C_5 h^{2}\int_{y}^{\mu/h} |S_{1}(t)|\: \mathrm{d}t\:\leq C_5 h\mu \max_{t\in[y,\mu/h]}|S_{1}(t)|\: .
\end{equation*}
Given $\varepsilon>0$, we fix $\mu$ larger than $4(C_1+C_2)/\varepsilon$. Next, we can choose $y$ such that $|S_{1}(x)|\leq \varepsilon x/(4\max\{C_3,\mu^{2}C_5\})$ for all $x\geq y$. Finally, if we choose $h_{0}<\min\{\mu/y, \sqrt{\varepsilon/(4C_4y^{2})} \}$, we obtain 
$$
\left|\sum_{n=0}^{\infty}c_n \gamma_{1}(\lambda_{n}h)\right|<\varepsilon \ \ \ \mbox{for } 0<h<h_0\: .
$$
This completes the proof of Theorem \ref{lsth4}.
\end{proof}

\end{document}